\documentclass{amsart}
\usepackage{tikz}
\usepackage{hyperref,graphicx}
\usepackage[backend=biber]{biblatex}
\addbibresource{../../math.bib}%

\newcommand{\Dfn}[1]{\emph{#1}}%
\newtheorem{thm}{Theorem}[section]

\newtheorem{lem}[thm]{Lemma}

\title[Alternating sign matrices through X-rays]{The set of
  alternating sign matrices which are determined by their X-ray is a
  member of the Catalan family}%
\author{Martin Rubey}%
\email{\url{Martin.Rubey@tuwien.ac.at}}%
\address{Fakult\"at f\"ur Mathematik und Geoinformation, TU Wien,
  Austria}%

\begin{document}
\begin{abstract}
  We exhibit a bijection between Dyck paths and alternating sign
  matrices which are determined by their antidiagonal sums.
\end{abstract}
\maketitle

\section{Introduction}
A fundamental question in discrete tomography is whether a binary
image can be reconstructed from a small number of projections.  As a
special case, one might restrict attention to permutation matrices,
and try to determine which vectors of antidiagonal sums appear only
once.  This problem, considered by Bebeacua, Mansour, Postnikov and
Severini~\cite{MR2301096}, is apparently still open.

In this note, we consider the analogous problem for alternating sign
matrices.  An alternating sign matrix is a square matrix of $0$s,
$1$s and $-1$s such that the sum of each row and each column is $1$,
and the nonzero entries in each row and in each column alternate in
sign.  For an $n\times n$-alternating sign matrix $A$, the $k$-th
(antidiagonal) sum is $x_k = \sum_{i+j=k+1} A_{i,j}$ and the
(antidiagonal) \Dfn{X-ray} is the vector $x_1,\dots,x_{2n-1}$.  For
example, the alternating sign matrices of size three together with
their X-rays are as follows:
\begin{center}
\scalebox{0.8}{\mbox{%
$\begin{array}{ccccccc}
\begin{pmatrix}
1 & 0 & 0 \\
0 & 1 & 0 \\
0 & 0 & 1
\end{pmatrix}& \begin{pmatrix}
0 & 1 & 0 \\
1 & 0 & 0 \\
0 & 0 & 1
\end{pmatrix}& \begin{pmatrix}
1 & 0 & 0 \\
0 & 0 & 1 \\
0 & 1 & 0
\end{pmatrix}& \begin{pmatrix}
0 & 1 & 0 \\
1 & -1 & 1 \\
0 & 1 & 0
\end{pmatrix}& \begin{pmatrix}
0 & 0 & 1 \\
0 & 1 & 0 \\
1 & 0 & 0
\end{pmatrix}& \begin{pmatrix}
0 & 0 & 1 \\
1 & 0 & 0 \\
0 & 1 & 0
\end{pmatrix}& \begin{pmatrix}
0 & 1 & 0 \\
0 & 0 & 1 \\
1 & 0 & 0
\end{pmatrix}\\[16pt]
1/ 0/ 1/ 0/ 1&
0/ 2/ 0/ 0/ 1&
1/ 0/ 0/ 2/ 0&
0/ 2/-1/ 2/ 0&
0/ 0/ 3/ 0/ 0&
0/ 1/ 1/ 1/ 0&
0/ 1/ 1/ 1/ 0
\end{array}$
}}
\end{center}
Note that all X-rays except $0/ 1/ 1/ 1/ 0$ occur precisely once.
Thus, there are five alternating sign matrices determined by there
X-rays.  We can now state our main result:
\begin{thm}
  There is an explicit bijection between Dyck paths of semilength $n$
  and $n\times n$-alternating sign matrices which are determined by
  their X-rays.
\end{thm}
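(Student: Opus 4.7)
My plan is to identify a natural invariance of the X-ray and use it to characterize the X-ray determined ASMs. A first observation is that transposing an ASM permutes entries within each antidiagonal, so it preserves the X-ray. Consequently, every X-ray determined ASM must equal its own transpose. For $n=3$ this symmetry condition already characterizes the X-ray determined ASMs (the five symmetric ASMs of order three coincide with the five in the example above), but one can check that for $n \geq 4$ there exist distinct symmetric ASMs with the same X-ray, so a finer invariant is needed.

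I would therefore look for a generating set of \emph{local} X-ray preserving moves on ASMs. Since the X-ray sums entries along antidiagonals, any such move must rearrange entries within antidiagonals while still producing a valid ASM, and the ASM axioms (row sums, column sums, alternating signs) make the admissible rearrangements quite rigid, probably involving only a small window of consecutive antidiagonals. Once such a family of moves is in hand, I would call an ASM \emph{reduced} if it admits no move, conjecturing that the reduced ASMs are exactly the X-ray determined ones. Transposition itself should then correspond to a particular sequence of moves.

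To obtain the explicit bijection with Dyck paths, I would try to encode a reduced ASM step by step, scanning antidiagonal by antidiagonal from the top-left corner: at each step I record a binary datum (``up step'' versus ``down step''), using the ASM structure to force the resulting word of length $2n$ to be a valid Dyck path of semilength $n$. The inverse procedure would reconstruct a reduced ASM from any prescribed Dyck path, verifying that the result is reduced and has the prescribed encoding.

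The main obstacle I expect lies in the second step: proving that the chosen local moves generate the full equivalence relation ``sharing the same X-ray''. This is a confluence / Church--Rosser style statement, and getting the moves right so that every equivalence class contains exactly one reduced representative is typically the most delicate part of such arguments. A secondary challenge is making the encoding in the third step genuinely canonical, so that the bijection is truly explicit rather than just an equinumerosity argument.
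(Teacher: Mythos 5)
Your proposal is a research plan rather than a proof, and the plan as stated has a genuine structural gap: you never construct the bijection, never exhibit the local moves, and --- more importantly --- the step you correctly identify as the main obstacle (showing that your moves generate the \emph{entire} equivalence relation ``same X-ray'', a confluence-type statement) is both unproved and stronger than what is actually needed. To show that an ASM is \emph{not} determined by its X-ray, it suffices to exhibit a \emph{single} other ASM with the same X-ray; you do not need to understand the whole equivalence class, let alone prove that a rewriting system is confluent with unique normal forms. Conversely, to show that an ASM \emph{is} determined by its X-ray, no system of moves helps at all --- you must argue directly that the X-ray vector forces every entry, which is a reconstruction argument of a completely different flavour.

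The paper's proof follows exactly this two-sided strategy. It first defines the bijection $\mathcal A$ explicitly (a Dyck path and its reflection through the main diagonal bound a region; peaks contribute antidiagonal runs of $1$s, valleys contribute runs of $-1$s). It then defines a single X-ray-preserving involution-like map $\mathcal M$ on diagonally symmetric ASMs whose fixed points are precisely the image of $\mathcal A$; combined with transposition (which handles the non-symmetric ASMs, as you observed), this certifies non-uniqueness for everything outside the image of $\mathcal A$. Finally, a separate lemma reconstructs any matrix in the image of $\mathcal A$ from its X-ray by induction over antidiagonals, using the rigidity of the ASM axioms to show each nonzero antidiagonal sum forces a maximal run of equal signs in a forced position. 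Your instinct about transposition and about scanning antidiagonals is sound, but without the explicit map $\mathcal A$, the witness map $\mathcal M$, and the reconstruction argument, the proposal does not yet establish the theorem, and the confluence route you sketch would be substantially harder than necessary.
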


The coincidence described by the theorem was observed when submitting
the statistic\footnote{\url{http://www.findstat.org/St000889}}
counting the number of alternating sign matrices with the same X-rays
to the online database of combinatorial statistics
FindStat~\cite{FindStat2017} and looking at the first few generating
functions automatically produced there.  We currently have no
explanation for any of the other terms in the distribution.

\section{The bijection}
The map $\mathcal A$ from Dyck paths to alternating sign matrices is
defined as follows, see Figure~\ref{fig:exampleA} for an example.
For more visual clarity in the pictures, we use ($+$)s and ($-$)s
instead of $1$s and $-1$s and omit $0$s.
\begin{itemize}
\item Draw the Dyck path in an $n\times n$ square, beginning in the
  top left corner, taking east and south steps and terminating in the
  bottom right corner, never going below the main diagonal of the
  matrix.
\item Add the reflection through the main diagonal of the Dyck path
  to the picture.
\item For each peak of the Dyck path, fill the cells lying between
  the peak and its mirror image on the antidiagonal with $1$s.
\item For each valley of the Dyck path, fill the cells lying between
  the valley and its mirror image on the antidiagonal with $-1$s.
\item Fill the remaining cells with $0$s.
\end{itemize}

\begin{figure}[h]
  \centering
  \begin{tikzpicture}[scale=0.5]
    \draw [line width=0.5] (0,0) -- (0,8) -- (8,8) -- (8,0) -- cycle;
    \draw[dotted] (0,8)--(8,0);
    \draw[rounded corners=1, line width=1]%
    (0,8)--(4,8)--%
    (4,7)--(6,7)--%
    (6,5)--(7,5)--%
    (7,4)--(8,4)--%
    (8,0);%
    \draw[rounded corners=1, line width=1, dotted]%
    (0,8)--(0,4)--%
    (1,4)--(1,2)--%
    (3,2)--(3,1)--%
    (4,1)--(4,0)--%
    (8,0);%
    \node[below left] at (4,8) {$1$};
    \node[below left] at (3,7) {$1$};
    \node[below left] at (2,6) {$1$};
    \node[below left] at (1,5) {$1$};
    \node[below left] at (6,7) {$1$};
    \node[below left] at (5,6) {$1$};
    \node[below left] at (4,5) {$1$};
    \node[below left] at (3,4) {$1$};
    \node[below left] at (2,3) {$1$};
    \node[below left] at (7,5) {$1$};
    \node[below left] at (6,4) {$1$};
    \node[below left] at (5,3) {$1$};
    \node[below left] at (4,2) {$1$};
    \node[below left] at (8,4) {$1$};
    \node[below left] at (7,3) {$1$};
    \node[below left] at (6,2) {$1$};
    \node[below left] at (5,1) {$1$};
    \node[below left] at (4,7) {$-1$};
    \node[below left] at (3,6) {$-1$};
    \node[below left] at (2,5) {$-1$};
    \node[below left] at (6,5) {$-1$};
    \node[below left] at (5,4) {$-1$};
    \node[below left] at (4,3) {$-1$};
    \node[below left] at (7,4) {$-1$};
    \node[below left] at (6,3) {$-1$};
    \node[below left] at (5,2) {$-1$};
    \foreach\x in {1,2,3,5,6,7,8}
      \node[below left] at (\x,8) {$0$};
    \foreach\x in {1,2,5,7,8}
      \node[below left] at (\x,7) {$0$};
    \foreach\x in {1,4,6,7,8}
      \node[below left] at (\x,6) {$0$};
    \foreach\x in {3,5,8}
      \node[below left] at (\x,5) {$0$};
    \foreach\x in {1,2,4}
      \node[below left] at (\x,4) {$0$};
    \foreach\x in {1,3,8}
      \node[below left] at (\x,3) {$0$};
    \foreach\x in {1,2,3,7,8}
      \node[below left] at (\x,2) {$0$};
    \foreach\x in {1,2,3,4,6,7,8}
      \node[below left] at (\x,1) {$0$};
  \end{tikzpicture}
  \quad
  \begin{tikzpicture}[scale=0.5]
    \draw [line width=0.5] (0,0) -- (0,8) -- (8,8) -- (8,0) -- cycle;
    \draw[dotted] (0,8)--(8,0);
    \draw[rounded corners=1, line width=1]%
    (0,8)--(4,8)--%
    (4,7)--(6,7)--%
    (6,5)--(7,5)--%
    (7,4)--(8,4)--%
    (8,0);%
    \draw[rounded corners=1, line width=1, dotted]%
    (0,8)--(0,4)--%
    (1,4)--(1,2)--%
    (3,2)--(3,1)--%
    (4,1)--(4,0)--%
    (8,0);%
    \node[below left] at (4,8) {$+$};
    \node[below left] at (3,7) {$+$};
    \node[below left] at (2,6) {$+$};
    \node[below left] at (1,5) {$+$};
    \node[below left] at (6,7) {$+$};
    \node[below left] at (5,6) {$+$};
    \node[below left] at (4,5) {$+$};
    \node[below left] at (3,4) {$+$};
    \node[below left] at (2,3) {$+$};
    \node[below left] at (7,5) {$+$};
    \node[below left] at (6,4) {$+$};
    \node[below left] at (5,3) {$+$};
    \node[below left] at (4,2) {$+$};
    \node[below left] at (8,4) {$+$};
    \node[below left] at (7,3) {$+$};
    \node[below left] at (6,2) {$+$};
    \node[below left] at (5,1) {$+$};
    \node[below left] at (4,7) {$-$};
    \node[below left] at (3,6) {$-$};
    \node[below left] at (2,5) {$-$};
    \node[below left] at (6,5) {$-$};
    \node[below left] at (5,4) {$-$};
    \node[below left] at (4,3) {$-$};
    \node[below left] at (7,4) {$-$};
    \node[below left] at (6,3) {$-$};
    \node[below left] at (5,2) {$-$};
  \end{tikzpicture}
  \caption{The image of A Dyck path}
  \label{fig:exampleA}
\end{figure}
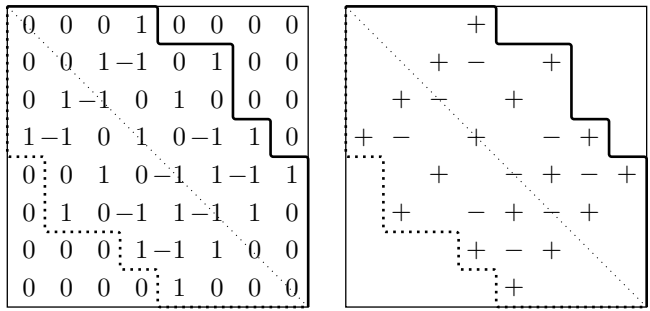

\section{A map on diagonally symmetric alternating sign matrices}
Because transposing a matrix preserves the X-ray, only diagonally
symmetric alternating sign matrices may be reconstructible from their
X-ray.  We now present a map $\mathcal M$ on diagonally symmetric
alternating sign matrices that preserves the X-ray and is the
identity precisely on the matrices in the image of the map
$\mathcal A$ from the previous section.  Let $A$ be a diagonally
symmetric alternating sign matrix, then $\mathcal M(A)$ is obtained
as follows, see Figure~\ref{fig:exampleM} for an example.
\begin{itemize}
\item Imagine a sun in the north-east, such that the $1$s in $A$ cast
  shadows, and trace out a Dyck path by following the shadow line.
\item Reflect the entries of $A$ which are strictly south-west of the
  entries just below the Dyck path through the subdiagonal.
\item Into each cell just south-west of a valley of the Dyck path
  which is not on the subdiagonal and which contains a $0$, place a
  $-1$, and place a $1$ in the cell reflected through the
  subdiagonal.
\end{itemize}
Note that the Dyck path constructed in the first step returns to the
main diagonal exactly once for each direct summand of $A$, regarding
$A$ as a block diagonal matrix.  Thus, the map $\mathcal M$ is such
that it can be applied to each direct summand of $A$ individually.

% [0 0 0 1 0 0]  [0 0 0 1 0 0]
% [0 1 0 0 0 0]  [0 0 1 0 0 0]
% [0 0 1 0 0 0]  [1 0 0 0 0 0]
% [1 0 0 0 0 0]  [0 1 0 0 0 0]
% [0 0 0 0 0 1]  [0 0 0 0 0 1]
% [0 0 0 0 1 0], [0 0 0 0 1 0], [[(4, 3)]]

% [1 0 0 0 0 0]  [ 1  0  0  0  0  0]
% [0 1 0 0 0 0]  [ 0  1  0  0  0  0]
% [0 0 0 0 1 0]  [ 0  0  0  0  1  0]
% [0 0 0 0 0 1]  [ 0  0  0  1 -1  1]
% [0 0 1 0 0 0]  [ 0  0  0  0  1  0]
% [0 0 0 1 0 0], [ 0  0  1  0  0  0]

\begin{figure}[h]
  \centering
  \begin{tikzpicture}[scale=0.40]
    \draw [line width=0.5] (0,0) -- (0,8) -- (8,8) -- (8,0) -- cycle;
    \draw [fill=gray, opacity=0.3]
    (0,7) -- (3,7) -- (3,6) -- (5,6) -- (5,4) -- (6,4) -- (6,3) --
    (7,3) -- (7,0) -- (4,0) -- (4,1) -- (3,1) -- (3,2) -- (1,2) --
    (1,4) -- (0,4) -- cycle;
    \draw[dotted] (0,7)--(7,0);
    \draw[rounded corners=1, line width=1]%
    (0,8)--(4,8)--%
    (4,7)--(6,7)--%
    (6,5)--(7,5)--%
    (7,4)--(8,4)--%
    (8,0);%
    \draw[rounded corners=1, line width=1, dotted]%
    (0,8)--(0,4)--%
    (1,4)--(1,2)--%
    (3,2)--(3,1)--%
    (4,1)--(4,0)--%
    (8,0);%
    \node[below left] at (2,7) {$+$};
    \node[below left] at (4,8) {$+$};
    \node[below left] at (1,5) {$+$};
    \node[below left] at (6,7) {$+$};
    \node[below left] at (5,6) {$+$};
    \node[below left] at (3,4) {$+$};
    \node[below left] at (2,3) {$+$};
    \node[below left] at (7,5) {$+$};
    \node[below left] at (4,2) {$+$};
    \node[below left] at (8,4) {$+$};
    \node[below left] at (7,3) {$+$};
    \node[below left] at (6,2) {$+$};
    \node[below left] at (5,1) {$+$};
    \node[below left] at (4,7) {$-$};
    \node[below left] at (2,5) {$-$};
    \node[below left] at (7,4) {$-$};
    \node[below left] at (6,3) {$-$};
    \node[below left] at (5,2) {$-$};
  \end{tikzpicture}
  \quad\raisebox{50pt}{$\mapsto$}\quad
  \begin{tikzpicture}[scale=0.40]
    \draw [line width=0.5] (0,0) -- (0,8) -- (8,8) -- (8,0) -- cycle;
    \draw [fill=gray, opacity=0.3]
    (0,7) -- (3,7) -- (3,6) -- (5,6) -- (5,4) -- (6,4) -- (6,3) --
    (7,3) -- (7,0) -- (4,0) -- (4,1) -- (3,1) -- (3,2) -- (1,2) --
    (1,4) -- (0,4) -- (0,7);
    \draw[dotted] (0,7)--(7,0);
    \draw[rounded corners=1, line width=1]%
    (0,8)--(4,8)--%
    (4,7)--(6,7)--%
    (6,5)--(7,5)--%
    (7,4)--(8,4)--%
    (8,0);%
    \draw[rounded corners=1, line width=1, dotted]%
    (0,8)--(0,4)--%
    (1,4)--(1,2)--%
    (3,2)--(3,1)--%
    (4,1)--(4,0)--%
    (8,0);%
    \node[below left] at (1,6) {$+$};
    \node[below left] at (4,8) {$+$};
    \node[below left] at (3,7) {$+$};
    \node[below left] at (6,7) {$+$};
    \node[below left] at (5,6) {$+$};
    \node[below left] at (4,5) {$+$};
    \node[below left] at (2,3) {$+$};
    \node[below left] at (7,5) {$+$};
    \node[below left] at (6,4) {$+$};
    \node[below left] at (8,4) {$+$};
    \node[below left] at (7,3) {$+$};
    \node[below left] at (6,2) {$+$};
    \node[below left] at (5,1) {$+$};
    \node[below left] at (4,7) {$-$};
    \node[below left] at (3,6) {$-$};
    \node[below left] at (7,4) {$-$};
    \node[below left] at (6,3) {$-$};
    \node[below left] at (5,2) {$-$};
  \end{tikzpicture}
  \quad\raisebox{50pt}{$\mapsto$}\quad
  \begin{tikzpicture}[scale=0.40]
    \draw [line width=0.5] (0,0) -- (0,8) -- (8,8) -- (8,0) -- cycle;
    \draw[dotted] (0,7)--(7,0);
    \draw[rounded corners=1, line width=1]%
    (0,8)--(4,8)--%
    (4,7)--(6,7)--%
    (6,5)--(7,5)--%
    (7,4)--(8,4)--%
    (8,0);%
    \draw[rounded corners=1, line width=1, dotted]%
    (0,8)--(0,4)--%
    (1,4)--(1,2)--%
    (3,2)--(3,1)--%
    (4,1)--(4,0)--%
    (8,0);%
    \node[below left] at (1,6) {$+$};
    \node[below left, red] (A) at (6,5) {$-$};
    \node[draw=black, circle, minimum size=12pt] at (A) {};
    \node[below left, red] (B) at (3,2) {$+$};
    \node[draw=black, circle, minimum size=12pt] at (B) {};
    \node[below left] at (4,8) {$+$};
    \node[below left] at (3,7) {$+$};
    \node[below left] at (6,7) {$+$};
    \node[below left] at (5,6) {$+$};
    \node[below left] at (4,5) {$+$};
    \node[below left] at (2,3) {$+$};
    \node[below left] at (7,5) {$+$};
    \node[below left] at (6,4) {$+$};
    \node[below left] at (8,4) {$+$};
    \node[below left] at (7,3) {$+$};
    \node[below left] at (6,2) {$+$};
    \node[below left] at (5,1) {$+$};
    \node[below left] at (4,7) {$-$};
    \node[below left] at (3,6) {$-$};
    \node[below left] at (7,4) {$-$};
    \node[below left] at (6,3) {$-$};
    \node[below left] at (5,2) {$-$};
  \end{tikzpicture}
  \caption{A diagonally symmetric alternating sign matrix and its
    image}
  \label{fig:exampleM}
\end{figure}
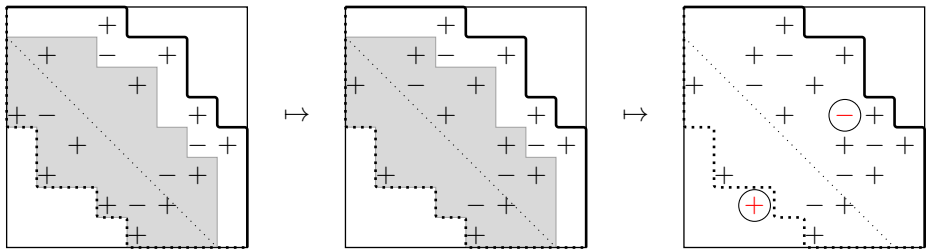

\begin{lem}
  The map $\mathcal M$, applied to a diagonally symmetric alternating
  sign matrix, produces an alternating sign matrix.
\end{lem}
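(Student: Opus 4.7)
The plan is to verify the three defining properties of an alternating sign matrix for $\mathcal{M}(A)$ in turn: entries in $\{-1,0,1\}$, row and column sums equal to $1$, and alternation of signs in each row and column.

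First I would pin down the region $S$ affected by step~2. One checks that $S$ is symmetric under subdiagonal reflection, so step~2 is a well-defined involution on the entries of $A$ lying in $S$, and in particular the intermediate matrix $B$ still has entries in $\{-1,0,1\}$. Since $A$ is diagonally symmetric, the subdiagonal reflection of $A_{i,j}$ equals $A_{i-1,j+1}$, giving the useful reformulation $B_{i,j}=A_{i-1,j+1}$ for $(i,j)\in S$; this shift description is the main technical tool.

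Next I would analyse the row sums of $B$. The cells of $S$ in row $i$ form a contiguous column interval $[a_i,b_i]$ determined by the staircase of the Dyck path, and the change in row $i$'s sum can be expressed as a difference of two ``corner'' quantities $\sigma_{i-1}(k+1)-\sigma_i(k)$ (where $\sigma_i$ denotes the partial row sum of $A$ in row $i$), evaluated at $k=b_i$ and $k=a_i-1$. Each corner quantity lies in $\{-1,0,1\}$ since partial row sums of an ASM lie in $\{0,1\}$. Using the structural fact that cells strictly north-east of the Dyck path are zero in $A$, together with the definition of valleys, I would show that the defect in row $i$ equals $\pm 1$ precisely when row $i$ contains the just-south-west cell of a valley of the Dyck path at which $A$ is zero, and vanishes otherwise. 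An analogous analysis for columns shows that the defects in the row and column of a valley's just-SW cell $(i,j)$ match in sign, while those in the row and column of the subdiagonally reflected cell $(j+1,i-1)$ have the opposite sign; step~3's placement of $-1$ at $(i,j)$ and $+1$ at $(j+1,i-1)$ then corrects all four defects simultaneously.

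Finally, the alternating sign condition needs checking. I would show that within each row of $B$, the nonzero entries still alternate, except possibly for a single missing $-1$ between two consecutive $+1$s, located precisely at the just-south-west cell of the relevant valley; the analogous statement for columns holds by the subdiagonal symmetry of the construction. Step~3 fills exactly these gaps. The main obstacle will be carrying out this alternation check in detail: one must track how the shift in step~2 moves the nonzero entries of $A$ within $S$ and verify that the resulting sign pattern in each row and column of $B$ is alternating up to precisely the gaps that step~3 is designed to close.
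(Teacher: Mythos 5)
Your outline is viable and in one respect sharper than the paper: the identity $B_{i,j}=A_{i-1,j+1}$ on the reflected region (subdiagonal reflection composed with the diagonal symmetry of $A$ is a shift along the antidiagonal) is exactly the right way to see what step~2 does, and it also explains for free why $\mathcal M$ preserves the X-ray. But as written the proposal is a plan rather than a proof: the two central claims --- that the row/column sum defects are $\pm1$ and occur \emph{exactly} in the rows and columns through a valley cell containing a $0$ and its subdiagonal reflection, and that the alternation in each row and column of $B$ fails only by a single missing entry at exactly those cells --- are announced as things ``I would show,'' and you explicitly defer the alternation check as ``the main obstacle.'' That obstacle is where the entire content of the lemma lives. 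The missing ingredient is the structural fact the paper's proof is built on: by the shadow-line construction, within the shade the topmost nonzero entry of every column and the rightmost nonzero entry of every row is a $1$, \emph{except} precisely in a column where the Dyck path has a peak whose valley cell holds a $0$, in which case that topmost entry is a $-1$. Since step~2 splices the (unchanged) part of column $c$ above the shade onto the reversed shade-segment of the corresponding row $r$, this one fact simultaneously pins down where alternation can break, what sign the break has, and why the $\pm1$ insertions of step~3 (which land strictly above the diagonal, another point your outline does not address) repair it. Without this observation or an equivalent, your corner-quantity computation shows only that each row defect lies in $\{-2,\dots,2\}$, not that it is supported where you claim.

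Two smaller points. First, ``one checks that $S$ is symmetric under subdiagonal reflection'' is not literally true for the region named in step~2 (everything strictly south-west of the cells just below the Dyck path): that region runs past the subdiagonal all the way to the south-west corner. You need the additional remark that, by the diagonal symmetry of $A$, all entries strictly south-west of the reflected shadow line vanish, so that only the symmetric ``shade'' carries nonzero entries and the reflection is well defined. Second, separating ``row and column sums equal $1$'' from the alternation check is redundant work: once each row and column alternates and begins and ends with a $1$, the sums are automatically $1$, which is why the paper verifies only the alternating structure at the splice points.
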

\begin{proof}
  Let $A$ be a diagonally symmetric alternating sign matrix.  Let us
  call the region in $A$ symmetric with respect to the subdiagonal,
  whose south-west border is the reflected Dyck path, the \Dfn{shade}
  of $A$.  This is the shaded region in Figure~\ref{fig:exampleM}.

  Consider a column $c$ of $A$, and its reflection $r$ through the
  subdiagonal.  Thus, when $c$ is the first column, $r$ is the second
  row of $A$.

  Suppose first that the top most nonzero entry of $c$ and the right
  most nonzero entry of $r$ within the shade of $A$ are both $1$.
  This is the case when $c$ (or $r$) does not contain a peak of the Dyck
  path, or the valley below (or to the left of) the peak contains a $-1$.

  In this case, column $c$ and row $r$ of $\mathcal M(A)$ satisfy the
  alternating sign matrix conditions, because after reflecting
  through the subdiagonal the top most nonzero entry of all columns,
  and the right most nonzero entry of all rows within the shade is
  $1$.

  Let us now consider the second scenario, where the top most nonzero
  entry of $c$ within the shade of the original matrix $A$ is $-1$.
  In the example of Figure~\ref{fig:exampleM}, this happens in the
  sixth column.

  In this case, the Dyck path must have a peak in this column.  Let
  $v$ be the cell just south-west of the valley below the peak.  Note
  that $v$ must contain a $0$.

  The cell $v$ must be strictly above the diagonal, because otherwise
  the reflection of the $-1$ below it through the main diagonal would
  lie on or above the Dyck path.  Thus, by definition of
  $\mathcal M$, we place a $-1$ into the cell $v$.  The effect of
  this is that column $c$ of $\mathcal M(A)$ is alternating.

  Furthermore, we place a $1$ in the cell $v'$ corresponding to $v$
  reflected through the subdiagonal.  This satisfies the alternating
  sign matrix conditions, because after reflecting through the
  subdiagonal, the row containing $v'$ begins with a $-1$.
\end{proof}

\begin{lem}
  The map $\mathcal M$, applied to a diagonally symmetric alternating
  sign matrix $A$, is the identity if and only if $A$ is in the image
  of $\mathcal A$.
\end{lem}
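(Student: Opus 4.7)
The map $\mathcal M$ can alter $A$ in only two places: the reflection of entries strictly south-west of the traced Dyck path through the subdiagonal, and the insertion of $\pm 1$ pairs at cells just south-west of valleys. Accordingly, $\mathcal M(A) = A$ is equivalent to two conditions: (i) the entries of $A$ inside the shade are already symmetric under reflection through the subdiagonal, and (ii) at every valley of the traced Dyck path that does not lie on the subdiagonal, the cell just south-west already contains a nonzero entry. The plan is to prove that (i) and (ii) hold if and only if $A$ lies in the image of $\mathcal A$.

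For the direction from image to fixed point, I would fix a Dyck path $D$ with $A = \mathcal A(D)$ and first verify that the shadow rule of $\mathcal M$ reproduces $D$: the outermost $+1$ attached by $\mathcal A$ to each peak sits flush against $D$, so the north-eastern sun traces out precisely $D$. Given this identification, condition (i) is automatic, since the $\pm 1$-bands that $\mathcal A$ places at peaks and valleys are symmetric about the main antidiagonal and hence, when restricted to the shade, are symmetric about the subdiagonal. Condition (ii) holds because at each valley of $D$ the cell just south-west is the first entry of the $-1$-band associated to that valley.

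For the converse, suppose $\mathcal M(A) = A$ and let $D$ be the shadow Dyck path. I would reconstruct $A$ as $\mathcal A(D)$ by walking along $D$ from north-west to south-east and identifying the $\pm 1$-bands column by column. The top-most $+1$ of each column meeting the shade is pinned flush against $D$ by the shadow construction. At each valley, condition (ii) together with the alternating sign rule forces a $-1$ immediately below the preceding $+1$-band, which becomes the head of the $-1$-band at that valley. Condition (i), combined with the diagonal symmetry of $A$, then extends every band along the full diagonal prescribed by $\mathcal A$.

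The main obstacle is precisely this reverse direction: one must show that no further nonzero entries can exist, i.e.\ that alternation together with (i), (ii), and diagonal symmetry rules out $\pm 1$ entries outside the predicted peak- and valley-diagonals. I would handle this by an inductive column-by-column argument, propagating the alternating pattern from one diagonal to the next and closing the induction once the subdiagonal is reached, using the previous lemma's case analysis to guarantee that the sign of the topmost entry in each column of the shade is correctly determined.
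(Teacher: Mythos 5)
Your decomposition of the fixed-point condition into (i) subdiagonal symmetry of the shade and (ii) a nonzero entry just south-west of each valley not on the subdiagonal is correct, and your forward direction agrees in substance with the paper's (which simply notes that for $\mathcal A(D)$ the shade is symmetric and the valley cells carry $-1$s). The problem is the converse, which you yourself flag as ``the main obstacle'' and only outline. The missing idea is this: reflection through the main diagonal sends $(i,j)$ to $(j,i)$, reflection through the subdiagonal sends $(i,j)$ to $(j+1,i-1)$, and their composition is the unit translation $(i,j)\mapsto(i+1,j-1)$ along antidiagonals. Hence diagonal symmetry of $A$ together with your condition (i) forces the entries of the shade to be \emph{constant along each antidiagonal}. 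That single observation is what propagates every nonzero entry into a full band running from a cell adjacent to the shadow path to its mirror image, and---combined with the alternating condition in the columns meeting the path and condition (ii)---identifies $A$ with $\mathcal A(D)$. Your proposed ``column-by-column induction propagating the alternating pattern'' does not by itself rule out a stray $\pm 1$ sitting deep inside the shade off the peak- and valley-antidiagonals: alternation along a single column constrains the signs of the nonzero entries, not their positions, so without the antidiagonal-constancy argument the induction does not close. For comparison, the paper argues the converse contrapositively---if $A$ is not in the image of $\mathcal A$, then forcibly symmetrizing the shade about the subdiagonal destroys diagonal symmetry, so $\mathcal M(A)\neq A$---but it is relying on exactly the same interplay of the two reflections; once you insert that observation, your direct reconstruction also goes through.
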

\begin{proof}
  If $A$ is in the image of $\mathcal A$, the shade of $A$ is
  symmetric.  Moreover, the cells just south-west of the valleys
  which are above the subdiagonal all contain $-1$s.  Thus,
  $\mathcal M(A) = A$.

  Otherwise, since $A$ is symmetric, and the shade of $A$ is
  reflected through the subdiagonal, $\mathcal M(A)$ cannot be
  symmetric.
\end{proof}

\section{Reconstructing the alternating sign matrix}

To complete the proof of the main theorem, we have to show the following:

\begin{lem}
  The X-ray corresponding to an alternating sign matrix in the image
  of $\mathcal A$ determines the matrix unambiguously.
\end{lem}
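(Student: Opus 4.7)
The plan is to show that the X-ray of $\mathcal A(D)$ is a transparent encoding of the peaks of the Dyck path $D$, from which $D$ itself can be recovered explicitly.

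First I would verify that each peak or valley of $D$ at lattice point $(a,b)$ contributes to exactly one entry of the X-ray: the cells that $\mathcal A$ fills in response to that peak (respectively valley) all lie on a single matrix antidiagonal, determined by the difference $b-a$, and form a run of length $a+b-n$. Thus a peak contributes $+(a+b-n)$ to that antidiagonal, a valley contributes $-(a+b-n)$, and both contribute zero to every other antidiagonal.

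Next I would check that these antidiagonals are pairwise distinct and that peaks and valleys interleave along the X-ray. Since consecutive peaks $(a_i,b_i)$ and $(a_{i+1},b_{i+1})$ satisfy $a_{i+1}>a_i$ and $b_{i+1}<b_i$, the quantity $b-a$ strictly decreases along the peaks; the valley $(a_i,b_{i+1})$ between them has a $b-a$ strictly between those of its flanking peaks. Consequently the nonzero entries of the X-ray are in bijection with the peaks and the nontrivial valleys of $D$, alternating in sign as $+,-,+,-,\ldots,+$.

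With this structure in hand, the reconstruction is immediate: each positive X-ray entry of value $\ell$ at a given position translates into the two linear conditions $a+b=\ell+n$ and a known value of $b-a$ (read off from the position), which pin the peak coordinates $(a,b)$ uniquely; the peaks in turn determine $D$, hence $\mathcal A(D)$. The only genuine obstacle is the possibility that contributions from different peaks or valleys could overlap on a common antidiagonal and spoil the encoding; the interleaving observation in the previous step rules this out, after which no arithmetic subtleties remain.
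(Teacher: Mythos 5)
There is a genuine gap, and it lies in what ``determines the matrix unambiguously'' has to mean for the main theorem to hold. As the introduction makes clear (``all X-rays except $0/1/1/1/0$ occur precisely once''), a matrix is determined by its X-ray when no \emph{other alternating sign matrix of the same size} --- whether or not it lies in the image of $\mathcal A$, whether or not it is diagonally symmetric --- has the same vector of antidiagonal sums. Your argument never introduces a competitor: you compute the X-ray of $\mathcal A(D)$ (each peak at $(a,b)$ contributing a run of $a+b-n$ ones on the single antidiagonal indexed by $b-a$, valleys likewise with $-1$s, all on pairwise distinct antidiagonals), and you invert that encoding to recover $D$. This shows that the map $D\mapsto \text{X-ray of }\mathcal A(D)$ is injective, i.e.\ that the X-ray determines the matrix \emph{among matrices of the form $\mathcal A(D)$}. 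It does not rule out that some entirely different alternating sign matrix, with its $1$s and $-1$s arranged so that cancellations happen to reproduce the same antidiagonal sums, shares the X-ray --- which is exactly the phenomenon exhibited by the two matrices with X-ray $0/1/1/1/0$ in the size-three example.

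The paper closes this gap with a forcing argument applied to an \emph{arbitrary} alternating sign matrix with the prescribed X-ray: processing the antidiagonals from the north-west corner, it shows inductively that each nonzero sum $x_\ell$ is extremal --- when $x_\ell>0$ it equals the total number of cells on that antidiagonal which could possibly carry a $1$ given the already-forced entries and the row/column alternation conditions, and symmetrically when $x_\ell<0$ --- so that every entry of the competitor is forced to agree with $\mathcal A(D)$. Your computation of the X-ray is correct and is implicitly what makes that extremality count come out right, so it is a useful ingredient; but to prove the lemma you must still take an arbitrary alternating sign matrix $B$ with the same X-ray and argue, antidiagonal by antidiagonal (or by some other means), that $B=\mathcal A(D)$.
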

\begin{proof}
  Consider the antidiagonal sums beginning at the north-west corner.
  Suppose that the entries of the first $k$ antidiagonals are
  uniquely determined by their X-rays $x_1/x_2/\dots/x_k$, and
  suppose that $x_k\neq 0$, $x_{k+1}=\dots=x_{\ell-1}=0$ and
  $x_{\ell}\neq 0$.

  For simplicity, assume that $x_k > 0$.  By hypothesis, the
  alternating sign matrix then has the following form:
  \begin{center}
    \begin{tikzpicture}[scale=0.4]
      \draw[dotted] (0,8)--(8,0);
      \draw[rounded corners=1, line width=1]%
      (0,8)--(4,8)--%
      (4,7)--(6,7);%
      \draw[rounded corners=1, line width=1, dotted]%
      (0,8)--(0,4)--%
      (1,4)--(1,2);%
      \draw [fill=gray, opacity=0.3] (6,6)--(6,2)--(2,2)--cycle;
      \draw [fill=red, opacity=0.3] (6,2)--(9,2)--(6,-1)--cycle;
      \node[below left] at (4,8) {$1$};
      \node[below left] at (3,7) {$1$};
      \node[below left] at (2,6) {$1$};
      \node[below left] at (1,5) {$1$};
      \node[below left] at (4,7) {$-1$};
      \node[below left] at (3,6) {$-1$};
      \node[below left] at (2,5) {$-1$};
      \node[below left] at (6,7) {$1$};
      \node[below left] at (5,6) {$1$};
      \node[below left] at (4,5) {$1$};
      \node[below left] at (3,4) {$1$};
      \node[below left] at (2,3) {$1$};
      \foreach\x in {1,2,3,5,6,7}
        \node[below left] at (\x,8) {$0$};
      \foreach\x in {1,2,5}
        \node[below left] at (\x,7) {$0$};
      \foreach\x in {1,4}
        \node[below left] at (\x,6) {$0$};
      \foreach\x in {3}
        \node[below left] at (\x,5) {$0$};
      \foreach\x in {1,2}
        \node[below left] at (\x,4) {$0$};
      \foreach\x in {1}
        \node[below left] at (\x,3) {$0$};
      \foreach\x in {1}
        \node[below left] at (\x,2) {$0$};
    \end{tikzpicture}
  \end{center}

  Let us first note that there cannot be any nonzero entries on the
  antidiagonals $k+1,\dots,\ell-1$, since all these have sum zero.
  More precisely, suppose for the sake of contradiction that there is
  such an antidiagonal and consider the first of these.  Because
  every row and every column of an alternating sign matrix must begin
  with a $1$, this antidiagonal can have $-1$s only in the triangular
  region south-east of the sequence of $1$s in the $k$-th
  antidiagonal - shaded grey in the example above.  However, there
  cannot be any $1$s on the same antidiagonal, necessarily outside of
  this triangular region: any such $1$ below the main diagonal would
  be followed by another $1$ in the same column above it.

  We now distinguish two cases: if $x_\ell < 0$, by hypothesis
  $x_\ell$ is so large that all cells of the antidiagonal within the
  triangular region defined above are filled with $-1$s.  Thus, in
  this case the entries on the $\ell$-th antidiagonal are also
  uniquely determined by the antidiagonal sum.

  On the other hand, if $x_\ell > 0$, by hypothesis $x_\ell$ is so
  large that all cells of the antidiagonal that lie within the
  triangular region shaded red in the example above are filled with
  $1$s.  Since there cannot be any $1$s on the same antidiagonal
  outside of the red triangular region, also in this case the entries
  of the $\ell$-th antidiagonal are uniquely determined by their sum.
\end{proof}

\printbibliography
\end{document}